\documentclass[amstex,wrapfig,epsf,11pt]{amsart}

\setlength{\floatsep}{12pt}
\setlength{\intextsep}{0pt}
\setlength{\textfloatsep}{0pt}
\setlength{\oddsidemargin}{1cm}
\setlength{\evensidemargin}{1cm}
\setlength{\textwidth}{15cm}
\setlength{\textheight}{22cm}
\setlength{\topmargin}{0cm}

\setlength\intextsep{8pt}   
\setlength\abovecaptionskip{8pt} 
\setlength\textfloatsep{8pt}

\usepackage{amssymb}
\usepackage{amsmath,amssymb,epsf,wrapfig,multicol,epic,eepic,trig,mathrsfs,dsfont,color,graphicx}
 \usepackage[abs]{overpic}
 \usepackage{here}
\usepackage{mediabb}

\theoremstyle{plain}
\newtheorem{thm}{Theorem}

\newtheorem{lem}[thm]{Lemma}

\theoremstyle{remark}

\def\hsymbu#1{\smash{\lower1.7ex\hbox{\huge$#1$}}}

\setcounter{MaxMatrixCols}{20}

\def\rmoveio#1#2{
\setlength{\unitlength}{#1}
\begin{picture}(50,30)
\put(5,0){\line(0,1){30}}

{\allinethickness{.8pt}
\put(10,15){\vector(1,0){13}}
\put(23,15){\vector(-1,0){13}}}

\qbezier(25,0)(25,20)(40,20)
\qbezier(40,20)(45,20)(45,15)
\qbezier(45,15)(45,10)(40,10)
\qbezier(40,10)(35,10)(31,14)
\qbezier(28,17)(25,25)(25,30)

\ifnum#2=2
\put(3,28){\path(0,0)(2,2)(4,0)}
\put(28,28){\path(0,0)(2,2)(4,0)}
\put(38,15){\makebox{${\Huge c_{1}}$}}
\fi

\end{picture}
}

\def\rmoveiio#1#2{
\setlength{\unitlength}{#1}
\begin{picture}(60,30)
\put(5,0){\line(0,1){30}}
\put(15,0){\line(0,1){30}}

{\allinethickness{.8pt}
\put(20,15){\vector(1,0){15}}
\put(35,15){\vector(-1,0){15}}}

\qbezier(40,0)(42,1)(47,3)
\qbezier(52,6)(68,15)(52,24)
\qbezier(47,27)(42,30)(40,30)

\qbezier(60,0)(20,15)(60,30)

\ifnum#2=2
\put(2,27){\path(0,0)(3,3)(6,0)}
\put(12,27){\path(0,0)(3,3)(6,0)}
\put(40,27){\path(0,0)(0,3)(3,3)}
\put(60,27){\path(0,0)(0,3)(-3,3)}
\put(47,16){\makebox{${\Huge c_{1}}$}}
\put(47,10){\makebox{${\Huge c_{2}}$}}
\fi

\ifnum#2=3
\put(2,2){\path(0,0)(3,-3)(6,0)}
\put(12,27){\path(0,0)(3,3)(6,0)}
\put(40,3){\path(0,0)(0,-3)(3,-3)}
\put(60,27){\path(0,0)(0,3)(-3,3)}
\put(47,16){\makebox{${\Huge c_{1}}$}}
\put(47,10){\makebox{${\Huge c_{2}}$}}
\fi

\end{picture}
}
\def\rmoveiiio#1#2{
\setlength{\unitlength}{#1}
\begin{picture}(75,30)
\put(0,0){\line(1,1){15}}
\qbezier(15,15)(20,20)(20,30)

\put(10,0){\line(-1,1){4}}
\qbezier(4,6)(-5,15)(5,25)
\put(5,25){\line(1,1){5}}

\qbezier(20,0)(20,10)(16,14)
\put(14,16){\line(-1,1){8}}
\put(4,26){\line(-1,1){4}}

{\allinethickness{.8pt}
\put(25,15){\vector(1,0){15}}
\put(40,15){\vector(-1,0){15}}}

\qbezier(50,0)(50,10)(55,15)
\put(55,15){\line(1,1){15}}

\put(60,0){\line(1,1){5}}
\qbezier(65,5)(75,15)(66,24)
\put(64,26){\line(-1,1){4}}

\put(70,0){\line(-1,1){4}}
\put(64,6){\line(-1,1){8}}
\qbezier(54,16)(50,20)(50,30)

\ifnum#2=2
\put(0,27){\path(0,0)(0,3)(3,3)}
\put(7,30){\path(0,0)(3,0)(3,-3)}
\put(17,27){\path(0,0)(3,3)(6,0)}

\put(10,23){\makebox{${\Huge c_{1}}$}}
\put(10,3){\makebox{${\Huge c_{2}}$}}
\put(20,13){\makebox{${\Huge c_{3}}$}}

\put(47,27){\path(0,0)(3,3)(6,0)}
\put(60,27){\path(0,0)(0,3)(3,3)}
\put(67,30){\path(0,0)(3,0)(3,-3)}

\put(70,23){\makebox{${\Huge c'_{2}}$}}
\put(70,3){\makebox{${\Huge c'_{1}}$}}
\put(60,13){\makebox{${\Huge c'_{3}}$}}
\fi

\end{picture}
}
\def\rmovevio#1#2{
\setlength{\unitlength}{#1}
\begin{picture}(50,30)
\put(5,0){\line(0,1){30}}

{\allinethickness{.8pt}
\put(10,15){\vector(1,0){15}}
\put(25,15){\vector(-1,0){15}}}

\qbezier(30,0)(30,20)(45,20)
\qbezier(45,20)(50,20)(50,15)
\qbezier(50,15)(50,10)(45,10)
\qbezier(45,10)(30,10)(30,30)

\put(34,15){\circle{5}}

\ifnum#2=2
\put(3,28){\path(0,0)(2,2)(4,0)}
\put(28,28){\path(0,0)(2,2)(4,0)}
\put(38,15){\makebox{${\Huge c_{1}}$}}
\fi

\end{picture}
}

\def\rmoveviio#1#2{
\setlength{\unitlength}{#1}
\begin{picture}(60,30)
\put(5,0){\line(0,1){30}}
\put(15,0){\line(0,1){30}}

{\allinethickness{.8pt}
\put(20,15){\vector(1,0){15}}
\put(35,15){\vector(-1,0){15}}}

\qbezier(40,0)(80,15)(40,30)
\qbezier(60,0)(20,15)(60,30)
\put(50,4){\circle{5}}
\put(50,25){\circle{5}}

\ifnum#2=2
\put(2,27){\path(0,0)(3,3)(6,0)}
\put(12,27){\path(0,0)(3,3)(6,0)}
\put(40,27){\path(0,0)(0,3)(3,3)}
\put(60,27){\path(0,0)(0,3)(-3,3)}
\put(47,15){\makebox{${\Huge c_{1}}$}}
\put(47,10){\makebox{${\Huge c_{2}}$}}
\fi

\ifnum#2=3
\put(2,2){\path(0,0)(3,-3)(6,0)}
\put(12,27){\path(0,0)(3,3)(6,0)}
\put(40,3){\path(0,0)(0,-3)(3,-3)}
\put(60,27){\path(0,0)(0,3)(-3,3)}
\put(47,15){\makebox{${\Huge c_{1}}$}}
\put(47,10){\makebox{${\Huge c_{2}}$}}
\fi

\end{picture}
}

\def\rmoveviiio#1#2{
\setlength{\unitlength}{#1}
\begin{picture}(70,30)
\put(0,0){\line(1,1){15}}
\qbezier(15,15)(20,20)(20,30)

\put(10,0){\line(-1,1){5}}
\qbezier(5,5)(-5,15)(5,25)
\put(5,25){\line(1,1){5}}

\qbezier(20,0)(20,10)(15,15)
\put(15,15){\line(-1,1){15}}

\put(5,5){\circle{5}}
\put(15,15){\circle{5}}
\put(5,25){\circle{5}}

{\allinethickness{.8pt}
\put(28,15){\vector(1,0){14}}
\put(42,15){\vector(-1,0){14}}}

\qbezier(50,0)(50,10)(55,15)
\put(55,15){\line(1,1){15}}

\put(60,0){\line(1,1){5}}
\qbezier(65,5)(75,15)(65,25)
\put(65,25){\line(-1,1){5}}

\put(70,0){\line(-1,1){15}}
\qbezier(55,15)(50,20)(50,30)

\put(65,5){\circle{5}}
\put(55,15){\circle{5}}
\put(65,25){\circle{5}}

\ifnum#2=2
\put(0,27){\path(0,0)(0,3)(3,3)}
\put(7,30){\path(0,0)(3,0)(3,-3)}
\put(17,27){\path(0,0)(3,3)(6,0)}

\put(20,13){\makebox{${\Huge c_{1}}$}}

\put(47,27){\path(0,0)(3,3)(6,0)}
\put(60,27){\path(0,0)(0,3)(3,3)}
\put(67,30){\path(0,0)(3,0)(3,-3)}

\put(60,13){\makebox{${\Huge c'_{1}}$}}
\fi

\end{picture}
}

\def\rmovevivo#1#2{
\setlength{\unitlength}{#1}
\begin{picture}(70,30)
\put(0,0){\line(1,1){15}}
\qbezier(15,15)(20,20)(20,30)

\put(10,0){\line(-1,1){4}}
\qbezier(4,6)(-5,15)(5,25)
\put(5,25){\line(1,1){5}}

\qbezier(20,0)(20,10)(16,14)
\put(16,14){\line(-1,1){16}}

\put(15,15){\circle{5}}
\put(5,25){\circle{5}}

{\allinethickness{.8pt}
\put(28,15){\vector(1,0){14}}
\put(42,15){\vector(-1,0){14}}}

\qbezier(50,0)(50,10)(55,15)
\put(55,15){\line(1,1){15}}

\put(60,0){\line(1,1){5}}
\qbezier(65,5)(75,15)(67,25)
\put(65,27){\line(-1,1){3}}

\put(70,0){\line(-1,1){16}}
\qbezier(54,16)(50,20)(50,30)

\put(65,5){\circle{5}}
\put(55,15){\circle{5}}

\ifnum#2=2
\put(0,27){\path(0,0)(0,3)(3,3)}
\put(7,30){\path(0,0)(3,0)(3,-3)}
\put(17,27){\path(0,0)(3,3)(6,0)}

\put(20,13){\makebox{${\Huge c_{1}}$}}

\put(47,27){\path(0,0)(3,3)(6,0)}
\put(60,27){\path(0,0)(0,3)(3,3)}
\put(67,30){\path(0,0)(3,0)(3,-3)}

\put(60,13){\makebox{${\Huge c'_{1}}$}}
\fi

\end{picture}
}
\def\rmovevfo#1#2{
\setlength{\unitlength}{#1}
\begin{picture}(70,30)
\put(0,0){\line(1,1){15}}
\qbezier(15,15)(20,20)(20,30)

\put(10,0){\line(-1,1){4}}
\qbezier(4,6)(-5,15)(5,25)
\put(5,25){\line(1,1){5}}

\qbezier(20,0)(20,10)(16,14)
\put(14,16){\line(-1,1){14}}

\put(5,25){\circle{5}}

{\allinethickness{.8pt}
\put(28,15){\vector(1,0){14}}
\put(42,15){\vector(-1,0){14}}}

\qbezier(50,0)(50,10)(55,15)
\put(55,15){\line(1,1){15}}

\put(60,0){\line(1,1){5}}
\qbezier(65,5)(75,15)(66,24)
\put(64,26){\line(-1,1){4}}

\put(70,0){\line(-1,1){14}}
\qbezier(54,16)(50,20)(50,30)

\put(65,5){\circle{5}}

\ifnum#2=2
\put(0,27){\path(0,0)(0,3)(3,3)}
\put(7,30){\path(0,0)(3,0)(3,-3)}
\put(17,27){\path(0,0)(3,3)(6,0)}

\put(20,13){\makebox{${\Huge c_{1}}$}}

\put(47,27){\path(0,0)(3,3)(6,0)}
\put(60,27){\path(0,0)(0,3)(3,3)}
\put(67,30){\path(0,0)(3,0)(3,-3)}

\put(60,13){\makebox{${\Huge c'_{1}}$}}
\fi

\end{picture}
}





\def\canocutsysic#1#2{
\setlength{\unitlength}{#1}
\begin{picture}(130,20)

\put(5,0){\line(1,1){20}}
\put(25,0){\line(-1,1){8}}
\put(5,20){\line(1,-1){8}}

\put(20,15){\path(0,0)(3,0)(0,3)(0,0)}
\put(20,5){\path(0,0)(3,0)(0,-3)(0,0)}

\put(5,0){\path(0,3)(0,0)(3,0)}
\put(25,0){\path(-3,0)(0,0)(0,3)}

\put(50,0){\line(1,1){8}}
\put(70,20){\line(-1,-1){8}}
\put(70,0){\line(-1,1){20}}

\put(65,15){\path(0,0)(3,0)(0,3)(0,0)}
\put(65,5){\path(0,0)(3,0)(0,-3)(0,0)}

\put(50,0){\path(0,3)(0,0)(3,0)}
\put(70,0){\path(-3,0)(0,0)(0,3)}

\ifnum#2=2
\put(0,15){\makebox{{\small $0$}}}
\put(0,0){\makebox{{\small $0$}}}
\put(27,15){\makebox{{\small $0$}}}
\put(27,0){\makebox{{\small $0$}}}
\put(15,14){\makebox{{\small $1$}}}
\put(15,1){\makebox{{\small $1$}}}
\put(45,15){\makebox{{\small $0$}}}
\put(45,0){\makebox{{\small $0$}}}
\put(72,15){\makebox{{\small $0$}}}
\put(72,0){\makebox{{\small $0$}}}
\put(60,14){\makebox{{\small $1$}}}
\put(60,1){\makebox{{\small $1$}}}

\fi

\put(95,0){\line(1,1){20}}
\put(115,0){\line(-1,1){20}}
\put(105,10){\circle{4}}

\put(95,0){\path(0,3)(0,0)(3,0)}
\put(115,0){\path(-3,0)(0,0)(0,3)}

\ifnum#2=2
\put(90,15){\makebox{{\small $0$}}}
\put(90,0){\makebox{{\small $0$}}}
\put(117,15){\makebox{{\small $0$}}}
\put(117,0){\makebox{{\small $0$}}}
\fi

\end{picture}
}







\begin{document}
\title[Diagrams realizing prescribed sublink diagrams]
{Diagrams realizing prescribed sublink diagrams for virtual links and welded links}
\author{Naoko Kamada} 
\thanks{This work was supported by JSPS KAKENHI Grant Number 19K03496.}
\address{Graduate School of Science,  Nagoya City University\\ 
1 Yamanohata, Mizuho-cho, Mizuho-ku, Nagoya, Aichi 467-8501 Japan
}

\date{}

\begin{abstract} 
Jin and Lee \cite{rjin2001}  proved the following: 
Suppose that  $D_1, \dots, D_n$ are  link diagrams.  Given a link $L$ which is partitioned into sublinks  $L_1, \dots, L_n$ admitting diagrams $D_1, \dots, D_n$ respectively, there is a diagram $D$ of $L$ whose restrictions to $L_1, \dots, L_n$ are isotopic to 
$D_1, \dots, D_n$, respectively. 
In this paper we show that a similar result does hold for welded links and does not for virtual links.

\end{abstract}
\maketitle

\section{Introduction}
G. T. Jin and J. H. Lee \cite{rjin2001} proved the following theorem. 

\begin{thm}[G. T. Jin and J. H. Lee \cite{rjin2001}]\label{lj1}
Suppose that  $D_1, \dots, D_n$ are  link diagrams.  Given a link $L$ which is partitioned into sublinks  $L_1, \dots, L_n$ admitting diagrams $D_1, \dots, D_n$ respectively, there is a diagram $D$ of $L$ whose restrictions to $L_1, \dots, L_n$ are isotopic in $\mathbb{R}^2$ to 
$D_1, \dots, D_n$, respectively. 
\end{thm}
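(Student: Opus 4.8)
The plan is to realize the prescribed restrictions not by manipulating a given diagram crossing by crossing, but by first putting $L$ into a geometric normal form and then reading off its projection. The guiding idea is that the self-crossings of a sublink $L_i$ and its linking with the other sublinks can be \emph{separated}: one concentrates all of the intrinsic topology of $L_i$ (its own knot/link type, drawn as $D_i$) inside a small ball $B_i$, while the linking of $L_i$ with $M_i := \bigcup_{j\neq i} L_j$ is pushed out onto thin ``fingers'' (doubled arcs) that leave $B_i$, wander through the complement of $M_i$ to realize that linking, and return. If the fingers can be routed so that, in a planar projection, each one crosses only strands of \emph{other} sublinks, then erasing $M_i$ from the projection leaves $D_i$ together with some crossingless doubled excursions, which retract back to $D_i$ by a planar isotopy. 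This is exactly the conclusion we want.

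First I would prove a gathering lemma: for each $i$ there is an ambient isotopy of $\mathbb{R}^3$ fixing $M_i$ pointwise (equivalently, an isotopy of $L_i$ in the complement $S^3\setminus M_i$) carrying $L_i$ into the form ``(tangle in a ball $B_i$) $\cup$ (fingers)'', where $B_i$ is disjoint from $M_i$ and from the balls used for the other sublinks, the fingers are mutually unlinked and unknotted doubled arcs attached to $B_i$, and the piece inside $B_i$ realizes the intrinsic type of $L_i$. Because the fingers are parallel excursions, they do not contribute to the self-type of $L_i$, so the part inside $B_i$ can be taken to be precisely $D_i$ with finitely many small ``windows'' cut out, one at the foot of each finger. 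Applying this gathering successively for $i=1,\dots,n$, and noting that each application keeps all previously gathered sublinks fixed pointwise, produces a link $\bar L$, ambient isotopic to $L$, consisting of $n$ disjoint boxes carrying $D_1,\dots,D_n$ together with interwoven fingers that realize all inter-sublink linking in a central region.

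Next I would choose a generic projection of $\bar L$ and verify the two points that make the restriction come out right. First, the fingers emanating from a single box $B_i$ only need to encode the linking of $L_i$ with $M_i$ (the mutual linking of the components of $L_i$ is already inside $B_i$), so they can be arranged to be mutually non-crossing in the projection and to cross only strands belonging to the other sublinks; this uses that they form a trivial tangle among themselves. Second, since every isotopy above preserves the link type, $\bar L$ is a diagram of $L$. Restricting the projection to $L_i$ then erases every strand of $M_i$, after which the fingers of $L_i$ become embedded, pairwise non-crossing doubled arcs crossing nothing, and a planar isotopy retracts them into their windows, recovering $D_i$.

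I expect the main obstacle to be the gathering lemma, i.e.\ making rigorous that a sublink can be isotoped, rel the remaining components, into ``ball plus mutually non-crossing fingers'' form with its intrinsic type concentrated in the ball. The delicate points are (a) that the linking with $M_i$ can genuinely be shepherded onto fingers that are unknotted and unlinked from one another, so that no finger is forced to cross another finger of the \emph{same} sublink in the projection, and (b) that performing the gathering one sublink at a time does not disturb the restrictions already arranged, which is why each step is required to fix the other components pointwise. Once the normal form is established, preservation of the type of $L$ is automatic and the final planar retraction is routine.
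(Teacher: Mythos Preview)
Theorem~\ref{lj1} is not proved in this paper; it is quoted from Jin--Lee. The closest thing to the paper's own proof is the proof of the welded generalization, Theorem~\ref{main1}, which specializes to the classical case (only the R1, R2, R3 cases of Lemma~\ref{lem:A} are then needed, and the over/under finger moves and over detour moves used there are realized by ordinary Reidemeister moves when no virtual crossings are present). That argument is purely two-dimensional: start from any diagram $D$ of $L$, show that each Reidemeister move on the subdiagram $D(L_1)$ can be lifted to a sequence of moves on $D$ leaving $D(L\setminus L_1)$ literally unchanged, iterate to realize $D_1$, then repeat for $L_2,\dots,L_n$. Because each lift fixes the complementary subdiagram exactly, the induction over $i$ is clean.

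Your route --- build a three-dimensional normal form and project once at the end --- is genuinely different, but the difficulty you flag is a real gap, not a technicality. The assertion that the fingers of $L_i$ ``form a trivial tangle among themselves'' is true only after deleting $M_i$; converting that into fingers that are \emph{non-crossing in a fixed planar projection} requires straightening them by an ambient isotopy that moves $M_i$, and hence moves the already arranged $L_1,\dots,L_{i-1}$, spoiling their restrictions. Your own condition (b), that each gathering fix the other components pointwise, is exactly what blocks this straightening; dropping it forfeits what was already achieved. If you try to repair things after projecting, you are left needing to uncross the projected fingers of $L_i$ by moves on the full diagram that do not alter the other subdiagrams --- which is precisely Lemma~\ref{lem:A}. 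So as written the geometric packaging does not bypass the diagrammatic lifting lemma; at the decisive step it reduces to it.
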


Virtual links were introduced by  L.~H.~Kauffman  \cite{rkaud} as equivalence classes of virtual link diagrams in $\mathbb{R}^2$ under a certain equivalence relation.  
They are in one-to-one correspondence with stable equivalence classes of links in thickened surfaces \cite{rcks,rkk}. 
   Welded links were introduced by R. Fenn R. Rimanyi, and C. Rouke \cite{rfrr} as equivalence classes of welded link diagrams in $\mathbb{R}^2$ under another equivalence relation.  
   Both virtual links and welded links are generalizations of links.  

In this paper we prove the following.  

\begin{thm}\label{main1}
Suppose that  $D_1, \dots, D_n$ are diagrams of welded links.  Given a welded link $L$ which is partitioned into sublinks  $L_1, \dots, L_n$ admitting diagrams $D_1, \dots, D_n$ respectively, there is a diagram $D$ of $L$ whose restrictions to $L_1, \dots, L_n$ are isotopic in $\mathbb{R}^2$ to 
$D_1, \dots, D_n$, respectively. 
\end{thm}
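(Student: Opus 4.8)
The plan is to reduce Theorem~\ref{main1} to a one-component statement and then run an argument that differs from the classical case only through the extra welded move. Concretely, I would first prove the following \emph{exchange lemma}: if $D$ is any diagram of the welded link $L$ and $c$ is one of its components with $D|_c$ welded-isotopic to a prescribed diagram $D'$ of the welded knot type of $c$, then $L$ admits a diagram $\hat D$ with $\hat D|_c = D'$ and $\hat D|_{c'} = D|_{c'}$ for every other component $c'$. The bookkeeping observation that makes this reduction clean is that a crossing between two \emph{distinct} components is invisible to every single-component restriction: erasing the other components also erases those crossings. Hence repeatedly applying the exchange lemma to $c = L_1, L_2, \dots, L_n$, starting from an arbitrary diagram of $L$, fixes the restriction on $L_i$ to be $D_i$ without ever disturbing the restrictions already arranged on $L_1, \dots, L_{i-1}$. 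This iteration yields the desired diagram $D$.

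To prove the exchange lemma I would realize a welded isotopy from $D|_c$ to $D'$ move-by-move inside the ambient diagram, touching only arcs of $c$. Since $D|_c$ and $D'$ represent the same welded knot, by definition they are connected by a finite sequence of generalized Reidemeister moves (classical R1--R3, virtual VR1--VR3, the mixed move, and the welded over-commute move). Each such move is supported in a small disk meeting only arcs of $c$; the only difficulty is that, inside the ambient diagram, that disk may be pierced by strands of other components, which block the move (most visibly, one cannot delete an R2 bigon, or slide a strand through an R3 triangle, while another strand runs through the region). The mechanism for removing such an obstruction is to push the piercing strand out across a corner of the disk. When the piercing strand lies \emph{over} the two arcs of $c$ bounding that corner, this push is exactly the welded over-commute move; this is the one step with no analogue in the virtual category, and it is what makes the statement true for welded links and, as the later counterexample shows, false for virtual links.

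The main obstacle, and the technical heart of the argument, is to guarantee that every obstruction encountered is of this clearable \emph{over} type rather than the \emph{under} type, which no move in either category removes. I would handle this by first preparing the diagram so that, along the arcs of $c$ where moves are to be performed, every other component passes over $c$; the freedom of over-arcs in the welded category (again a consequence of the over-commute move, which makes any over-passing arc completely mobile) is what allows this preparation, and the Alexander-numbering and cut-point bookkeeping available in this setting provides the global combinatorial control needed to arrange the over/under pattern and to track it as the moves are carried out. An appealing alternative worth attempting is to bypass this analysis by transporting the whole problem to a genuinely classical one via a cut-system correspondence and then quoting the Jin--Lee theorem (Theorem~\ref{lj1}) directly; either way, the crux is the same, namely that the welded over-commute move supplies precisely the mobility of over-arcs that the classical proof takes for granted and that the virtual category lacks.
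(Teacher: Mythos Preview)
Your high-level architecture is exactly that of the paper: reduce to a two-part exchange lemma (the paper's Lemma~\ref{lem:B}), establish it one generalized Reidemeister move at a time (Lemma~\ref{lem:A}), and then iterate over the components. The divergence, and the gap, is in how a single move on $D(L_1)$ is to be realized when $L_2$-strands cross the support disk.

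You propose to evacuate the $L_2$-strands from the disk, after first ``preparing'' the diagram so that on the relevant arcs of $c$ every other component passes \emph{over} $c$. That preparation step is not justified, and the justification you sketch points at the wrong arcs. At a crossing where $L_2$ lies \emph{under} $c$, it is $c$ that is the over-arc; over-detour mobility therefore lets you move $c$, not $L_2$, and moving $c$ alters $D(L_1)$. You cannot convert an $L_2$-under-$c$ crossing into an $L_2$-over-$c$ crossing without changing the welded link, and you cannot in general slide such a crossing off the arc either: the support disk may contain virtual $L_1$--$L_2$ crossings (these are invisible in $D(L_1)$, so nothing forbids them), and sliding the under-strand of $L_2$ past such a virtual crossing is exactly the forbidden under-commute. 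The Alexander-numbering/cut-point remark and the cut-system alternative are both too vague to close this hole.

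The paper sidesteps evacuation entirely: it never moves $L_2$. Instead it moves carefully chosen arcs of $L_1$, via three explicit ``finger moves'' (under, over, virtual) that drag an $L_1$-arc along another $L_1$-arc while creating new crossings of a prescribed type with whatever $L_2$-strands lie in the way. The under and virtual finger moves are already valid in the virtual category; the over finger move is the single place where WR is used. Each generalized Reidemeister move is then dispatched by a short case-specific recipe (for instance, R3 is realized by two under finger moves followed by an over detour). The point is that the choice of which $L_1$-arc to drag, and in which mode, is made per move so that only allowed triple-point moves ever occur; no global normalization of the $L_2$--$c$ crossing pattern is needed or attempted.
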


A similar statement to Theorems~\ref{lj1} and \ref{main1} does not hold for virtual links. 

\begin{thm}\label{main2}
There exist diagrams $D_1$ and $D_2$ of virtual links, and a virtual link $L$ which is partitioned  into $L_1$ and $L_2$ admitting diagrams $D_1$ and $D_2$ such that $L$ does not admit any diagram $D$ whose restrictions to $L_1$ and $L_2$ are isotopic in $\mathbb{R}^2$ to $D_1$ and $D_2$, respectively.
\end{thm}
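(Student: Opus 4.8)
The plan is to produce an explicit counterexample and to detect its failure with a surface-homological invariant, exploiting the structural contrast with the welded case of Theorem~\ref{main1}. In the classical setting of Theorem~\ref{lj1}, and equally in the welded setting, one is free to reroute the arcs that join the prescribed sublink diagrams: an over-arc may be slid across the rest of the diagram (in the welded case this is exactly the permitted over-commute move), so \emph{any} prescribed $D_1,\dots,D_n$ can be assembled into a diagram of the given $L$. For virtual links neither forbidden move is available, so rerouting a connecting arc is obstructed, and this is the phenomenon I would try to upgrade into a genuine impossibility. Concretely, I would represent a virtual link by a link in a thickened surface $\Sigma\times I$ via the correspondence of \cite{rcks,rkk}, and I would measure each component through its homology class in $H_1(\Sigma)$, read off diagrammatically by means of the Alexander numbering and cut-point constructions set up above.

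First I would fix the building blocks. Take $L_1$ and $L_2$ to be virtual knots of supporting genus $1$, each represented on a torus by an essential simple closed curve, and let $D_1$ and $D_2$ be minimal-genus diagrams realizing them; the point of choosing essential curves is that in the minimal surface the class $[L_i]$ is primitive and nonzero. Then I would define $L=L_1\cup L_2$ to be the specific two-component virtual link obtained by placing these two curves on a single torus so that their homology classes and their mutual position are pinned down (for instance curves of slopes $(1,0)$ and $(0,1)$ meeting once), which keeps the supporting genus of $L$ equal to $1$. The restrictions of the evident diagram of $L$ to its two components are \emph{not} $D_1$ and $D_2$, and the assertion to be proved is that no diagram of $L$ restricts to both $D_1$ and $D_2$ simultaneously.

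The heart of the matter is a non-realizability lemma. I would assume a diagram $D$ of $L$ with $D|_{L_1}$ isotopic to $D_1$ and $D|_{L_2}$ isotopic to $D_2$, pass to the thickened surface $\Sigma(D)\times I$ supporting $D$, and analyze how the two components sit there. Deleting $L_2$ from $D$ produces $D_1$, so $L_1$ lies in a subsurface realizing $D_1$ and its homology data are forced to be exactly those dictated by $D_1$, and symmetrically for $L_2$; on the other hand the isotopy type of $L$ pins down the relative position of the two components. Heuristically one sees the tension already at the planar level: two closed curves cross evenly in $\mathbb{R}^2$, yet a prescribed residue of those crossings must be \emph{real} in order to reproduce the linking pattern of $L$, and I would aim to show that with the self-crossing data of $D_1$ and $D_2$ frozen this cannot be arranged without forcing at least one additional self-crossing onto $L_1$ or $L_2$ --- so that the corresponding restriction can no longer equal $D_i$. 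The rigorous form of this is where the cut-point calculus does the work: ``$D|_{L_i}$ isotopic to $D_i$'' forces the Alexander numbering along the $L_i$-strands of $D$ to coincide with that of $D_i$ up to cut points, and the homology class of $L$ in its minimal surface then supplies a numerical mismatch between the cut-point count imposed by $D_1\cup D_2$ and the one imposed by $L$.

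The main obstacle is that this numerical obstruction must be shown to rule out \emph{every} diagram of $L$ at once, not merely one convenient choice: since one may always add virtual crossings and stabilize, the supporting surface of $D$ --- and hence its genus --- is unbounded, so a bare genus comparison cannot close the argument. I therefore expect the decisive and most delicate step to be verifying that the cut-point obstruction attached to $L$ is a genuine invariant of $L$, insensitive to stabilization of $\Sigma(D)$, so that enlarging the surface on which $D$ is drawn cannot evade it; granting this, I would simply compute the invariant for the explicit $L$ above and check that it is nonzero, which yields the desired contradiction and proves Theorem~\ref{main2}.
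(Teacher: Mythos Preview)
Your proposal has a genuine gap at two levels. First, the explicit example does not work. An essential simple closed curve on the torus with no self-intersections has an empty Gauss code, so as a virtual knot it is the \emph{unknot}, with supporting genus $0$, not $1$; thus your $D_1,D_2$ are just crossingless circles. Then your link $L$---the $(1,0)$ and $(0,1)$ curves on a torus meeting once---has a planar diagram consisting of two round circles meeting at exactly two points, one classical and one virtual (two closed curves in $\mathbb{R}^2$ must cross evenly, and the single classical crossing forces one extra virtual crossing). In that diagram each restriction \emph{is} a crossingless circle, so this $L$ does admit a diagram realizing both prescribed $D_i$; it is not a counterexample. Second, even setting the example aside, you never actually define the ``cut-point obstruction'' or prove it is invariant under all virtual moves and stabilizations; you correctly identify this as the decisive step and then leave it open. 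Homology classes in a supporting surface are not themselves stable-equivalence invariants, so something further is genuinely needed here.

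The paper closes both gaps with a different, sharper tool: the $2$-cyclic covering $D\mapsto\widetilde{D}$ of \cite{rkn3,rkn4}, which is already known to be a virtual-link invariant (Theorem~\ref{thm:double}) and which respects the partition of $L$ into $L_1,L_2$. The key structural fact is that the covering construction replaces neighborhoods of virtual crossings by arcs with only \emph{virtual} crossings, so if $D(L_1)$ has no classical crossings then neither does $\widetilde{D}_1$, forcing its internal linking number to be $0$. The paper then exhibits a specific two-component diagram $D'$ (with $D'(L_1)$ a virtual unknot) for which $\widetilde{D'}_1$ has linking number $1$. Since $\widetilde{D}_1$ and $\widetilde{D'}_1$ would have to be v-equivalent, this is the contradiction. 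If you want to repair your approach, you would need an invariant with the same two features: provable invariance under all virtual moves, and a value that is forced to be trivial whenever the restriction $D(L_1)$ is the prescribed $D_1$, regardless of what $D(L_2)$ looks like.
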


\section{Virtual links and welded links}\label{sect:virtualwelded} 

We recall virtual links and welded links.  

In this paper a {\it diagram} means a collection of immersed oriented loops in $\mathbb{R}^2$ such that the multiple points are transverse double points which are classified into classical crossings and virtual crossings: 
A {\it classical crossing} is a crossing with over/under information as usual in knot theory, and a {\it virtual crossing} is a crossing without over/under information \cite{rkaud}.  A virtual crossing is depicted as a crossing encircled with a small circle. (Such a circle is not considered as a component of the diagram.)  A classical crossing is also called a positive or negative crossing according to the sign of the crossing as usual in knot theory.  

Two diagrams are  {\it v-equivalent} or {\it equivalent as virtual links}  if they are related by a finite sequence of local moves depicted in Figure~ \ref{fgmoves} except WR up to isotopies of $\mathbb{R}^2$.  A {\it virtual link} is an equivalence class of diagrams under this equivalence relation.  

Two diagrams are {\it w-equivalent} or {\it equivalent as welded links}  if they are related by a finite sequence of local moves depicted in Figure~\ref{fgmoves}  
up to isotopies of $\mathbb{R}^2$.  A {\it welded link} is an equivalence class of diagrams under this equivalence relation.  

A diagram without virtual crossings is called a {\it classical link diagram}.  
Two classical link diagrams are {\it r-equivalent} or {\it equivalent as classical links}  if they are related by a finite sequence of local moves R1, R2 and R3 depicted in Figure~\ref{fgmoves}  
up to isotopies of $\mathbb{R}^2$.  

It is known that two classical link diagrams are equivalent as classical links if and only if they are equivalent as virtual (or welded) links.  In this sense, virtual links and welded links are generalizations of classical links.

%
\begin{figure}[H]
\centerline{
\begin{tabular}{ccc}
\rmoveio{.4mm}{1}&\rmoveiio{.4mm}{1}&\rmoveiiio{.4mm}{1}\\
R1 & R2 &  R3 \\
\end{tabular}}
\vspace{0.2cm}
\centerline{
\begin{tabular}{cccc}
\rmovevio{.4mm}{1}&\rmoveviio{.4mm}{1}&\rmoveviiio{.4mm}{1}&\rmovevivo{.4mm}{1}\\
VR1 &  VR2 & VR3 & VR4 \\
\end{tabular}}
\vspace{0.2cm}
\centerline{
\begin{tabular}{c}
\rmovevfo{.4mm}{1}\\
WR \\
\end{tabular}}
\caption{Moves}\label{fgmoves}
\end{figure}

Let $D'$ be a diagram obtained from a diagram $D$ by one of the local moves 
depicted in Figure~\ref{fgmoves}, a {\it support} of the move is a 
region $M$ in $\mathbb{R}^2$ which is homeomorphic to the $2$-disk such that 
 $D$ and $D'$ are identical in $\mathbb{R}^2 \setminus M$ and that 
$D \cap M$ and $D' \cap D'$ are depicted in the figure.

A {\it detour move} is a deformation of a diagram depicted in Figure~\ref{fig:detour} (i), where the box stands for a diagram which does not change.  
Two diagrams related by a detour move are equivalent as virtual links and as welded links.  

An {\it over detour move} is a deformation of a virtual/welded link diagram in Figure~\ref{fig:detour} (ii). 
Two diagrams related by a detour move are equivalent as welded links. 

\begin{figure}[H]
\begin{center}
\includegraphics[width=13.cm]{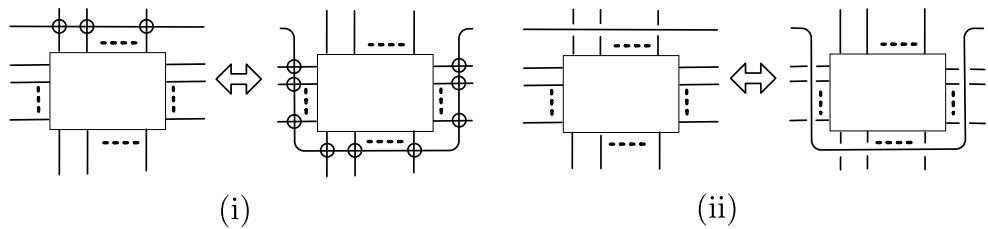}
\end{center}
\caption{Two detour moves}\label{fig:detour}
\end{figure}

\section{Proof of Theorem \ref{main1}}\label{sect:main1}

We introduce three moves depicted in 
Figure~\ref{fig:line2}, which do not change the equivalence class of a diagram as a welded link.  

For each move in the figure, in the left hand side of the move, let $\alpha$ be the vertical arc and 
let $v_0, v_1, \dots, v_m$ be crossings on $\alpha$ appearing  in this order from the bottom to the top. Let $\beta_0, \beta_1, \dots, \beta_m$ be the arcs intersecting $\alpha$ at $v_0, v_1, \dots, v_m$, respectively. Move $\beta$ toward the top along $\alpha$ and we obtain an arc $\beta_0'$ as in the right hand side.  

(i) An {\it under finger move} is as follows:  At $v_0$, $\beta_0$ is under $\alpha$.  
For each $i \in \{1, \dots, m\}$, when $v_i$ is a classical crossing, the two crossings of $\beta_0'$ and $\beta_i$ are classical crossings where $\beta_0'$ is under $\beta_i$.  When $v_i$ is a virtual crossing, the two crossings of $\beta_0'$ and $\beta_i$ are virtual crossings.  The intersection of $\beta_0'$ and $\alpha$ is a classical crossing where $\beta_0'$ is under $\alpha$.  

(ii) An {\it over finger move} is as follows: At $v_0$, $\beta_0$ is over $\alpha$.  
For each $i \in \{1, \dots, m\}$, the two crossings of $\beta_0'$ and $\beta_i$ are classical crossings where $\beta_0'$ is over $\beta_i$.   The intersection of $\beta_0'$ and $\alpha$ is a classical crossing where $\beta_0'$ is over $\alpha$.  
  
(iii) A {\it virtual finger move} is as follows:  At $v_0$, $\beta_0$ meets $\alpha$ as a virtual crossing.  
For each $i \in \{1, \dots, m\}$, the two crossings of $\beta_0'$ and $\beta_i$ are virtual crossings.   The intersection of $\beta_0'$ and $\alpha$ is also a virtual crossing.  

An over finger move  is an over detour move, and a virtual finger move is a detour move.  

\begin{figure}[ht]
\begin{center}
\includegraphics[width=10.cm]{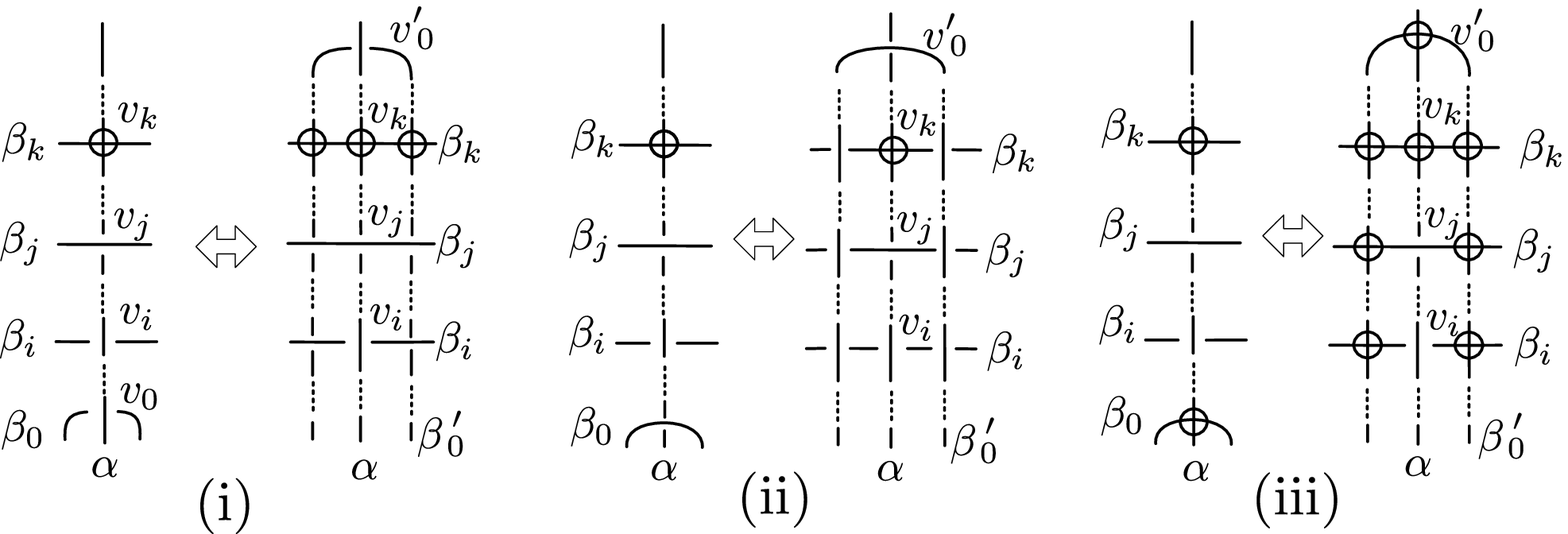}
\end{center}
\caption{Under, over and virtual finger moves}\label{fig:line2}
\end{figure}

Given a diagram of $D$ of a welded link $L$ and a sublink $L_0$, the restriction of $D$ to $L_0$ is the diagram obtained from $D$ by removing the components not belonging to $L_0$.  It is denoted by $D(L_0)$.

\begin{lem}\label{lem:A}
Let $D$ be a diagram of a welded link $L$ partitioned into $L_1$ and $L_2$.  
Let $D_1$ be a diagram obtained from $D(L_1)$ by a local move depicted in Figure~\ref{fgmoves}.  
There is a diagram $D'$ of $L$ such that $D'(L_1)$ is isotopic to $D_1$ in $\mathbb{R}^2$ and $D'(L_2) = D(L_2)$. 
\end{lem}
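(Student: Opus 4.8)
The plan is to localize the move to its support and then make room for it inside the full diagram by sweeping the components of $L_2$ out of the way with the finger moves of Figure~\ref{fig:line2}. Let $M$ be a support of the given move, so that $M$ is a disk, $D(L_1)$ and $D_1$ agree outside $M$, and $D(L_1)\cap M$, $D_1\cap M$ are the two sides of the corresponding move in Figure~\ref{fgmoves}. Inside $M$ the full diagram $D$ consists of the arcs of $L_1$ appearing in the move together with finitely many arcs of $L_2$; after a small isotopy I may assume these $L_2$ arcs meet $\partial M$ transversally and cross the $L_1$ arcs in finitely many double points, each of which is a classical (positive or negative) or a virtual crossing.

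First I would clear the interior of $M$. For each arc of $L_2$ meeting $M$, I push it out of $M$ by finger moves, choosing the type according to how it meets the $L_1$ arcs it sweeps across: where the $L_2$ arc passes over an $L_1$ arc I apply an over finger move, where it passes under I apply an under finger move, and at a virtual crossing I apply a virtual finger move. Since each of the three finger moves preserves the welded-link class (the over and virtual ones being detour moves, and the under one by the assertion preceding this lemma), the composite sweep $\Phi$ is a w-equivalence. Writing $\widetilde D=\Phi(D)$, the disk $M$ now meets $\widetilde D$ only in the arcs of $L_1$, that is $\widetilde D\cap M=D(L_1)\cap M$, while the arcs of $L_2$ have been rerouted into a thin collar just outside $\partial M$.

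With $M$ emptied of $L_2$, the move of Figure~\ref{fgmoves} can be carried out literally inside $M$, replacing $D(L_1)\cap M$ by $D_1\cap M$ and leaving everything else fixed; this is again a w-equivalence, producing $\widetilde D'$. Finally I apply the reverse sweep, sliding the arcs of $L_2$ back along their rails to their original positions in $\mathbb{R}^2$, and set $D'=\Phi^{-1}(\widetilde D')$. Each step is a w-equivalence, so $D'$ is a diagram of $L$; the arcs of $L_1$ have been altered only by the move, so $D'(L_1)$ is isotopic to $D_1$; and since the forward sweep followed by its reverse acts as the identity on the planar position of every arc of $L_2$, we obtain $D'(L_2)=D(L_2)$ on the nose.

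The step I expect to require the most care is the clearing sweep together with its reversal. I must check that every arc of $L_2$ can indeed be pushed entirely out of $M$ using only the three finger moves, and in particular that the \emph{under} crossings are handled by the under finger move rather than by an illegitimate under detour move; this is precisely where the welded hypothesis is used. I must also verify that applying the reverse sweep after the move really returns each $L_2$ arc to its original planar position even though the $L_1$ pattern inside $M$ has changed in the meantime. This is where the fact that a finger move is a w-equivalence for an \emph{arbitrary} configuration of the crossed arcs $\beta_1,\dots,\beta_m$ is essential, since the reverse sweep is performed against the new pattern $D_1\cap M$ rather than the old one.
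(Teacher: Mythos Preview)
Your strategy differs from the paper's in a basic way: you move the $L_2$-arcs out of the support and back, while the paper leaves $L_2$ untouched and instead deforms the $L_1$-arcs, handling each move type R1--WR separately with explicit sequences of under/over/virtual finger moves and detour moves performed on $L_1$. Because $L_2$ is never moved, the paper gets $D'(L_2)=D(L_2)$ for free; the burden is only to check that the restriction to $L_1$ ends up isotopic to $D_1$, which is read off from the pictures.

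Your scheme, as written, has a genuine gap at exactly the two places you flag. First, the under finger move of Figure~\ref{fig:line2}(i) slides the under-crossing of $\beta_0$ \emph{along} $\alpha$; it does not make $\beta_0$ leave $\alpha$, and along the way it creates new under-crossings of $\beta_0$ with every classical $\beta_i$ that $\alpha$ meets. So ``push the $L_2$-arc out of $M$ by the appropriate finger move at each under-crossing'' is not a well-defined terminating procedure: you can relocate an under-crossing toward $\partial M$, but you may spawn further under-crossings with the other $L_1$-strands in $M$, and the argument that this eventually empties $M$ is missing. Second, and more seriously for the equality $D'(L_2)=D(L_2)$, the $\beta_i$ encountered during your sweep include other $L_2$-arcs, so the forward sweep manufactures new $L_2$--$L_2$ crossings. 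Undoing those requires literally reversing $\Phi$ along the same $L_1$-rails, but after the $L_1$-move those rails inside $M$ have changed, so $\Phi^{-1}$ is not available and there is no reason the ad hoc reverse sweep against the new pattern cancels the extra $L_2$--$L_2$ crossings. The paper's choice to deform $L_1$ rather than $L_2$ is precisely what makes both of these issues disappear.
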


\begin{proof}
We say that a simple arc $\gamma$ in a diagram is an {\it arc of type (i), (ii), (iii), (iv) or (v)} 
and it is drawn with a thick line, a thick dotted line, a thin line, a thin dotted line, or a thin dashed line, respectively, 
if 
one of the following conditions (i)--(v)  is satisfied respectively, see Figure~\ref{fig:line1}:  

\begin{itemize}
\item[(i)] (On a thick line,) no condition is required.  
\item[(ii)] (On a thick  dotted line,) at each classical crossing on $\gamma$, the arc $\gamma$ is an under arc.
\item[(iii)] (On a thin line,) there is no crossing on $\gamma$.
\item[(iv)] (On a thin dotted line,) every crossing on $\gamma$ is a classical crossing where $\gamma$ is an over arc. 
\item[(v)] (On a thin dashed line,) every crossing on $\gamma$ is a virtual crossing. 
\end{itemize}

\begin{figure}[H]
\begin{center}
\includegraphics[width=10.cm]{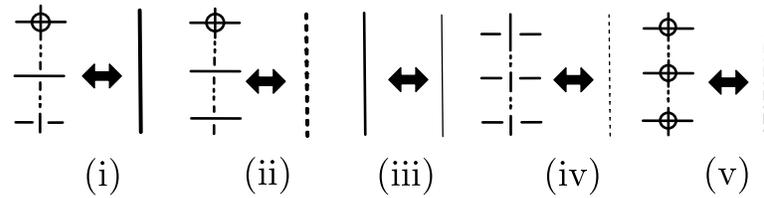}
\end{center}
\caption{segments of virtual link diagrams}\label{fig:line1}
\end{figure}

For example, the three moves in Figure~\ref{fig:line3} stand for the three finger moves 
in Figure~\ref{fig:line2}.

 
\begin{figure}[H]
\begin{center}
\includegraphics[width=10.cm]{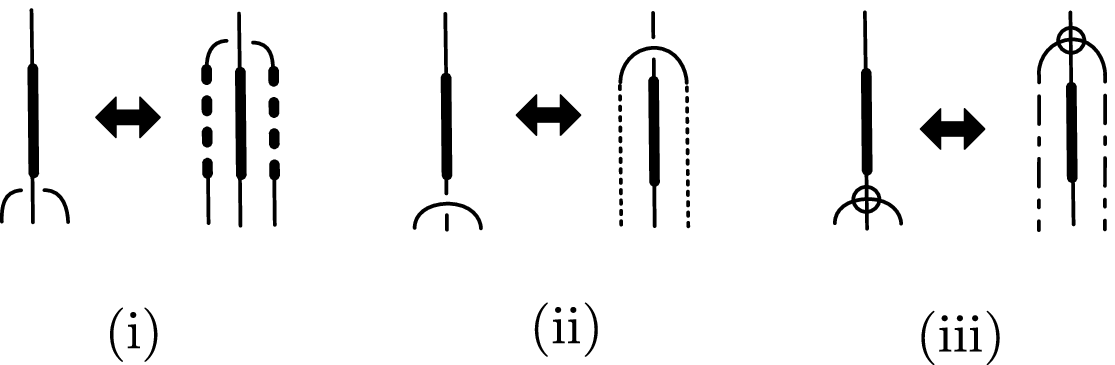}
\end{center}
\caption{Under, over and virtual finger moves}\label{fig:line3}
\end{figure}

In what follows, let $M$ be a support of the local move sending $D(L_1)$ to $D_1$. 

The case of R1. 
First we consider the case where an R1 move from left to right in Figure~\ref{fgmoves} is applied.  
Take a small arc $A$ in $D(L_1)$ in $M$ and take a small rectangular disk in $M$, say $\Delta$, containing $A$ and avoiding $D(L_2)$. 
Apply an  R1 move from the left to the right in $\Delta$ as in Figure~\ref{fig:prmove1} (i) and we obtain a desired diagram $D'$.   (The shaded region is $M \setminus \Delta$.)

We consider the case where an R1 move from right to left in Figure~\ref{fgmoves} is applied.  
Let $c$ be a self crossing of $D(L_1)$ which is removed by the R1 move.  Take a small rectangular region in $M$, say $\Delta$, containing $c$ and avoiding $D(L_2)$.  Apply a sequence of deformation to $D(L_1)$ 
as in Figure~\ref{fig:prmove1} (ii) and we obtain a desired diagram $D'$.   (The shaded region is $M \setminus \Delta$. The first deformation is an over finger move.  The second deformation is an over detour move.  
The third deformation is an R1 move.)  

\begin{figure}[ht]
\begin{center}
\includegraphics[width=12.cm]{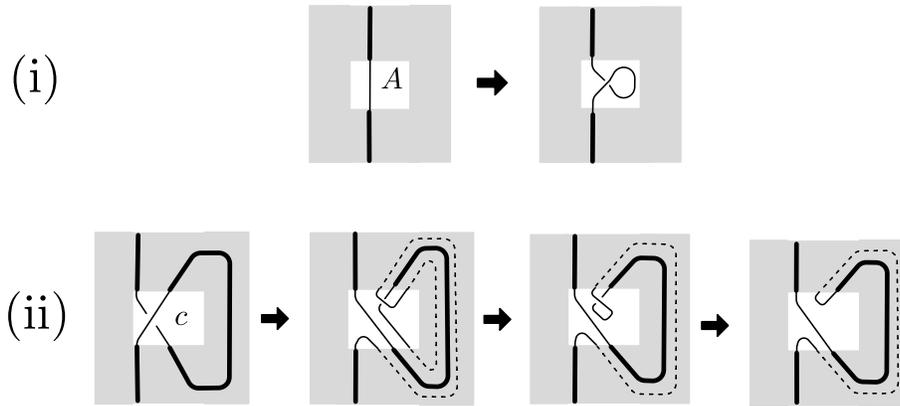}
\end{center}
\caption{The case of R1}\label{fig:prmove1}
\end{figure}

The case of VR1. 
We consider the case where a VR1 move in Figure~\ref{fgmoves} is applied.  
By a similar argument to the case of an R1 move, we have a desired diagram $D'$. 
See Figure~\ref{fig:pvrmove1}. 
(In (ii), the first deformation is a virtual finger move.  The second deformation is a detour move.  
The third deformation is a VR1 move.)  

\begin{figure}[ht]
\begin{center}
\includegraphics[width=12.cm]{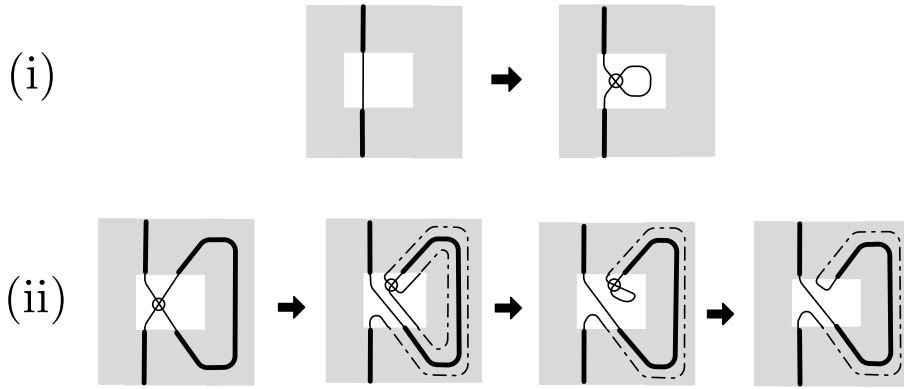}
\end{center}
\caption{The case of VR1 move}\label{fig:pvrmove1}
\end{figure}

The case of R2.  
We consider the case where an R2 move from left to right in Figure~\ref{fgmoves} is applied.  
Take small arcs $A_1, A_2$ in $D(L_1)$ and take a pair of small rectangular regions in $M$, say $\Delta_1, \Delta_2$ containing $A_1, A_2$ and avoiding $D(L_2)$ as in the left of Figure~\ref{fig:prmove2} (i).  Applying an over finger move as in Figure~\ref{fig:prmove2} (i) and we obtain a desired diagram $D'$.   (The shaded region is $M \setminus (\Delta_1 \cup \Delta_2)$.)

We consider the case where an R2 move from right to left in Figure~\ref{fgmoves} is applied.  
Let $c_1, c_2$ be  self crossings of $D(L_1)$ which are removed by the R2 move in $M$.   Take small rectangular regions in $M$, say $\Delta_1, \Delta_2$, containing $c_1, c_2$ and avoiding $D(L_2)$.  Apply a sequence of deformation to $D(L_1)$ 
as in Figure~\ref{fig:prmove2} (ii) and we obtain a desired diagram $D'$.  (The first deformation is an under finger move.  The second deformation is an over detour move.)

\begin{figure}[H]
\begin{center}
\includegraphics[width=10cm]{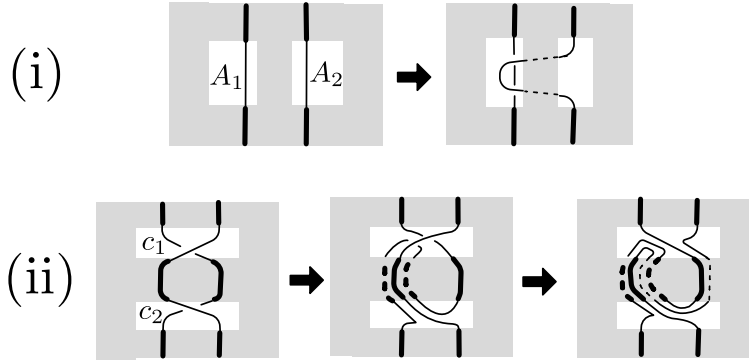}
\end{center}
\caption{The case of R2}\label{fig:prmove2}
\end{figure}

The case of VR2.  
By a similar argument to the case of R2, we have a desired diagram $D'$. 
See Figure~\ref{fig:pvrmove2}. 
(In (i), the deformation is a virtual finger move.  In (ii), the first deformation is a virtual finger move. The second deformation is a detour move.)

\begin{figure}[H]
\begin{center}
\includegraphics[width=10cm]{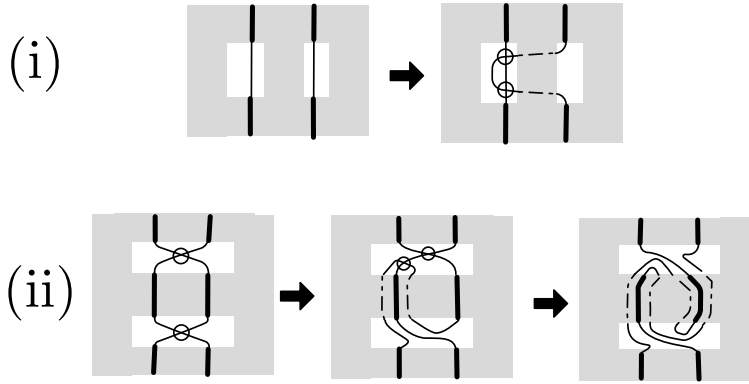}
\end{center}
\caption{The case of VR2}\label{fig:pvrmove2}
\end{figure}

The case of R3.  
We consider the case where an R3 move from left to right in Figure~\ref{fgmoves} is applied.  
Let $c_1, c_2$ and $c_3$ be the three crossings of $D(L_1)$ where the R3 move is applied, and take small rectangular regions in $M$, say say $\Delta_1, \Delta_2, \Delta_3$, containing $c_1, c_2, c_3$ and avoiding $D(L_2)$.  Apply a sequence of deformation to $D(L_1)$ 
as in Figure~\ref{fig:prmove3} and we obtain a desired diagram $D'$.  (The shaded region is $M \setminus (\Delta_1 \cup \Delta_2 \cup \Delta_3)$. 
The first deformation is a consecutive application of  
two under finger moves.  The second deformation is an over detour move.)  

An R3 move from right to left is not necessary, since it is obtained from an R3 move from left to right by rotating the figure by 180 degree.  

\begin{figure}[ht]
\begin{center}
\includegraphics[width=7.cm]{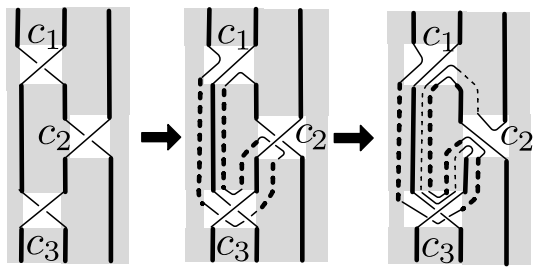}
\end{center}
\caption{The case of R3}\label{fig:prmove3}
\end{figure}

The case of VR3. 
We consider the case where a VR3 move from left to right in Figure~\ref{fgmoves} is applied.  
 Apply a sequence of deformation to $D(L_1)$ 
as in Figure~\ref{fig:pvrmove3} and we obtain a desired diagram $D'$.  
(The first deformation is a consecutive application of  
two virtual finger moves.  The second deformation is a detour move.)  
A VR3 move from right to left is not necessary.  

\begin{figure}[H]
\begin{center}
\includegraphics[width=7.cm]{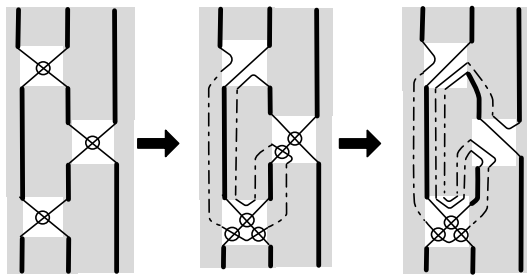}
\end{center}
\caption{The case of VR3}\label{fig:pvrmove3}
\end{figure}

The case of VR4.  
We consider the case where an VR4 move from left to right in Figure~\ref{fgmoves} is applied. 
Apply a sequence of deformation to $D(L_1)$ 
as in Figure~\ref{fig:pvrmove4} and we obtain a desired diagram $D'$.  
(The first deformation is a consecutive application of  
two  virtual finger moves.  The second deformation is a detour move.)  
A VR4 move from right to left is not necessary.  

\begin{figure}[ht]
\begin{center}
\includegraphics[width=7.cm]{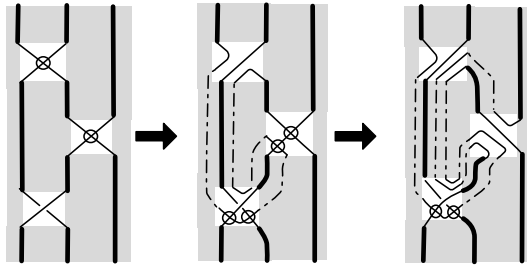}
\end{center}
\caption{The case of VR4}\label{fig:pvrmove4}
\end{figure}

The case of WR.  
We consider the case where an WR move from left to right in Figure~\ref{fgmoves} is applied. 
Apply a sequence of deformation to $D(L_1)$ 
as in Figure~\ref{fig:pfmove} and we obtain a desired diagram $D'$.  
(The first deformation is a consecutive application of  
two over finger move.  The second deformation is an under finger move. 
The third deformation is an over detour move.)  
A WR move from right to left is not necessary.  
\end{proof}

\begin{figure}[ht]
\begin{center}
\includegraphics[width=9.5cm]{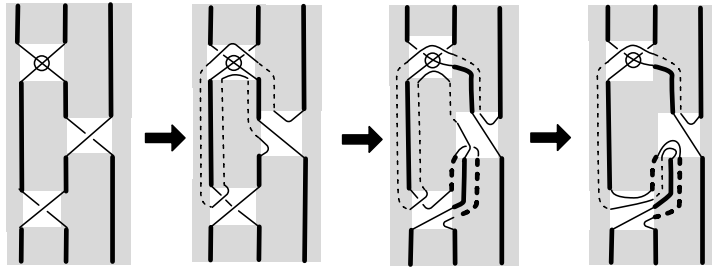}
\end{center}
\caption{The case of WR}\label{fig:pfmove}
\end{figure}

\begin{lem}\label{lem:B}
Let $D$ be a diagram of a welded link $L$ partitioned into $L_1$ and $L_2$.  
Let $D_1$ be a diagram of $L_1$.  
There is a diagram $D'$ of $L$ such that $D'(L_1)$ is isotopic to $D_1$ in $\mathbb{R}^2$ and $D'(L_2) = D(L_2)$. 
\end{lem}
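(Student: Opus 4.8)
The plan is to reduce Lemma~\ref{lem:B} to an iterated application of Lemma~\ref{lem:A}. The key observation is that both $D_1$ and $D(L_1)$ are diagrams of the \emph{same} welded link $L_1$: since $D$ is a diagram of $L$, its restriction $D(L_1)$ is a diagram of the sublink $L_1$, which by hypothesis is the welded link presented by $D_1$. Hence $D_1$ and $D(L_1)$ are w-equivalent, so by definition there is a finite sequence of diagrams
\[
D(L_1) = E_0,\ E_1,\ \dots,\ E_k = D_1
\]
in which each $E_{j}$ is obtained from $E_{j-1}$ either by a single local move from Figure~\ref{fgmoves} or by an ambient isotopy of $\mathbb{R}^2$.

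I would then induct on $j$, constructing diagrams $D = D^{(0)}, D^{(1)}, \dots, D^{(k)}$ of $L$ such that $D^{(j)}(L_1)$ is isotopic in $\mathbb{R}^2$ to $E_j$ and $D^{(j)}(L_2) = D(L_2)$ for every $j$. The base case $j=0$ holds with $D^{(0)} = D$. For the inductive step, suppose $D^{(j-1)}$ has been constructed and fix an ambient isotopy $h$ of $\mathbb{R}^2$ carrying $E_{j-1}$ to $D^{(j-1)}(L_1)$. If $E_j$ is obtained from $E_{j-1}$ by an isotopy, then $D^{(j-1)}(L_1)$ is already isotopic to $E_j$ and I simply set $D^{(j)} = D^{(j-1)}$. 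If instead $E_j$ is obtained from $E_{j-1}$ by a local move supported in a disk $M$, then $h(E_j)$ is obtained from $D^{(j-1)}(L_1) = h(E_{j-1})$ by the same type of local move, now supported in the disk $h(M)$. Applying Lemma~\ref{lem:A} to the diagram $D^{(j-1)}$ (partitioned into $L_1$ and $L_2$) and to this move yields a diagram $D^{(j)}$ of $L$ with $D^{(j)}(L_1)$ isotopic to $h(E_j)$, hence to $E_j$, and with $D^{(j)}(L_2) = D^{(j-1)}(L_2) = D(L_2)$. Setting $D' = D^{(k)}$ then gives a diagram of $L$ with $D'(L_1)$ isotopic to $E_k = D_1$ and $D'(L_2) = D(L_2)$, as required.

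The point that needs the most care is ensuring that Lemma~\ref{lem:A} can be invoked at every stage, even though $D^{(j-1)}(L_1)$ agrees with $E_{j-1}$ only up to isotopy rather than literally. I expect this to be the main obstacle, and I would handle it by transporting the support disk of the $j$-th move through the ambient isotopy $h$: the image $h(M)$ is a genuine support disk for the corresponding move performed on the actual diagram $D^{(j-1)}(L_1)$, so that the new $L_1$-diagram really is obtained from $D^{(j-1)}(L_1)$ by a single local move, which is exactly the hypothesis Lemma~\ref{lem:A} requires. Note that $h(M)$ may well meet the fixed part $D(L_2)$ of the diagram, but this causes no difficulty, since Lemma~\ref{lem:A} is designed precisely to realize a move on the $L_1$-part while leaving the $L_2$-part untouched, regardless of how the remaining components traverse the support. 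Because the identity $D^{(j)}(L_2) = D(L_2)$ is preserved at each step of the induction, the final diagram $D'$ restricts to $D(L_2)$ on the nose, which is the strong form of the conclusion.
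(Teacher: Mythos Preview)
Your proposal is correct and follows exactly the approach the paper intends: the paper's proof of this lemma is the single sentence ``Using Lemma~\ref{lem:A} inductively, we obtain the result,'' and your argument is precisely the natural unpacking of that sentence, including the care about transporting the support disk by the ambient isotopy so that Lemma~\ref{lem:A} applies at each stage.
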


\begin{proof}
Using Lemma~\ref{lem:A} inductively, we obtain the result.  
\end{proof}

\begin{proof}[Proof of Theorem~\ref{main1}]
Suppose that  $D_1, \dots, D_n$ are  welded link diagrams.  Let $L$ be a welded link partitioned into sublinks  $L_1, \dots, L_n$ admitting diagrams $D_1, \dots, D_n$ respectively.  

Let $D= D^{(0)}$ be a diagram of $L$.  
By considering $L$ to be partitioned into $L_1$ and $L \setminus L_1$ and applying Lemma~\ref{lem:B}, there is a diagram $D^{(1)}$ of $L$ such that $D^{(0)}(L_1)$ is isotopic to $D^{(1)}(L_1)$  and 
$D^{(0)}(L \setminus L_1) = D^{(1)}(L \setminus L_1)$.  

Inductively, for $i=2, \dots, n$, assume that we have a diagram $D^{(i-1)}$.  
By considering $L$ to be partitioned into $L_i$ and $L \setminus L_i$ and applying Lemma~\ref{lem:B}, there is a diagram $D^{(i)}$ of $L$ such that $D^{(i-1)}(L_i)$ is isotopic to $D^{(i)}(L_i)$  and 
$D^{(i-1)}(L \setminus L_i) = D^{(i)}(L \setminus L_i)$.  

Then $D'= D^{(n)}$ is a desired diagram.  
\end{proof}

\section{Proof of Theorem \ref{main2}}\label{sect:main2}

In order to prove Theorem~\ref{main2}, we use the notion of the 2-cyclic covering of a virtual link, introduced in \cite{rkn3,rkn4}. 

Let $D$ be a diagram. 
Moving $D$ by an isotopy of $\mathbb{R}^2$, we assume that $D$ is 
on the left of the $y$-axis and all crossings have distinct $y$-coordinates. 
Let $D^*$ be a copy of $D$ on the right of the $y$-axis which is obtained from $D$ by sliding along the $x$-axis. 
Let $v_1,\dots, v_k$ be the virtual crossings of $D$ and 
let $v^*_1,\dots, v^*_k$ be the corresponding virtual crossings of $D^*$. 
For each $i \in \{1,\dots ,k\}$, we denote by $l_i $ the horizontal line containing 
$v_i$ and $v_i^*$, and let $N(l_i)$ be a regular neighborhood of $l_i$ in $\mathbb{R}^2$.  
   Consider a diagram, denoted by $\widetilde{D}$, obtained from $D \cup  D^*$ by replacing 
the intersection with $N(l_i)$ for each $i \in \{1,\dots , k\}$ as in Figure~\ref{fig:defwcov2}. 
We call the diagram $\widetilde{D}$ a {\it 2-cyclic covering diagram} of $D$.

\begin{figure}[ht]
\begin{center}
\includegraphics[width=14.cm]{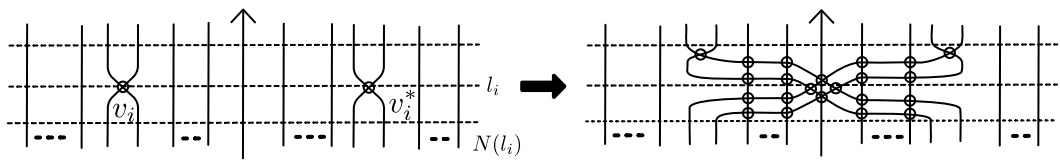}
\end{center}
\caption{2-cyclic covering diagram}\label{fig:defwcov2}
\end{figure}

For example, for the diagram $D$ depicted in Figure~\ref{fig:defwcov1} (i), the diagram $D \cup D^*$ is as in (ii). Then we have a 2-cyclic covering diagram $\widetilde{D}$ as in (iii). 

\begin{figure}[ht]
\begin{center}
\includegraphics[width=10.cm]{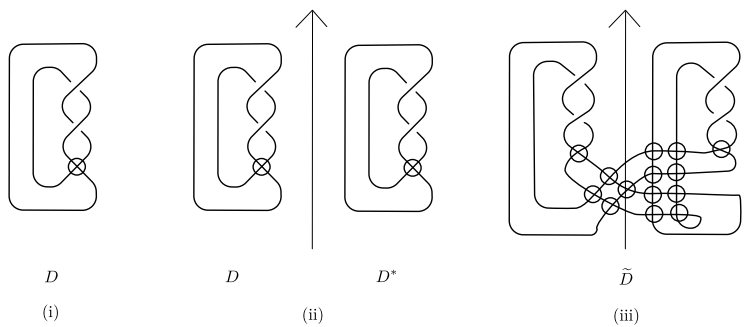}
\end{center}
\caption{A 2-cyclic covering diagram}\label{fig:defwcov1}
\end{figure}

\begin{thm}[\cite{rkn3,rkn4}]\label{thm:double}
Let $D$ and $D'$ be diagrams. If $D$ is equivalent to $D'$ as a virtual link, $\widetilde{D}$ is equivalent to $\widetilde{D'}$ as a virtual link. 
\end{thm}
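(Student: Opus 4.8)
The plan is to reduce the statement to a finite check on the generating moves of virtual equivalence. Since $D$ and $D'$ are equivalent as virtual links, they are joined by a finite sequence $D = E_0, E_1, \dots, E_m = D'$ in which each $E_{j+1}$ is obtained from $E_j$ by a single local move among R1, R2, R3, VR1, VR2, VR3, VR4, or by an ambient isotopy of $\mathbb{R}^2$. Because virtual equivalence of the covering diagrams is transitive, it suffices to prove that $\widetilde{E_j}$ is equivalent to $\widetilde{E_{j+1}}$ as virtual links in each of these elementary cases; concatenating these equivalences along the sequence then yields $\widetilde{D} \sim \widetilde{D'}$. I note that this reduction also subsumes the well-definedness of $\widetilde{D}$ up to virtual equivalence, since two admissible placements of the same diagram (left of the $y$-axis, with crossings at distinct heights) differ by an isotopy of $\mathbb{R}^2$, which is one of the elementary cases.

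First I would dispose of the ambient isotopies and the classical moves R1, R2, R3. The key observation is that, away from the modification regions $\bigcup_i N(l_i)$, the covering diagram $\widetilde{D}$ is simply the disjoint union of the left copy $D$ and the right copy $D^*$. An isotopy preserving the relative vertical order of the virtual crossings alters $\widetilde{D}$ only by an isotopy of $\mathbb{R}^2$, hence is harmless. The genuine codimension-one isotopies are the height exchanges in which two crossings swap their $y$-order: when only classical crossings are involved the lines $l_i$ and the recipe of Figure~\ref{fig:defwcov2} are unaffected, while an exchange that reorders a virtual crossing must be checked to produce covering diagrams related by detour moves and VR-moves. As for a classical move, it can be performed inside a small disk on the left copy chosen disjoint from every line $l_i$ (after an order-preserving isotopy, already handled), and the identical move is performed simultaneously on the corresponding disk of $D^*$. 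Thus $\widetilde{E_{j+1}}$ is obtained from $\widetilde{E_j}$ by two applications of the same classical move, which is an equivalence of virtual links.

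The essential work, and the step I expect to be the main obstacle, is the analysis of the virtual moves VR1--VR4, since these are precisely the moves whose supports contain virtual crossings and therefore meet the modification regions $N(l_i)$ that define the covering. For each such move I would draw the local picture of $\widetilde{E_j}$ produced by applying the recipe of Figure~\ref{fig:defwcov2} to both copies, and then exhibit an explicit sequence of detour moves and VR-moves carrying it to the local picture of $\widetilde{E_{j+1}}$. The difficulty is that a VR-move creates, destroys, or rearranges virtual crossings, hence changes which horizontal lines $l_i$ are drawn and how the two sheets are reconnected across the $y$-axis; so the covering of a VR-move is not a simple doubling but a genuine recombination of the two copies. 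Organizing this case analysis carefully---especially for VR3, VR4, and the height exchanges that slide a virtual crossing past another crossing---is where the care is required, with the detour moves (which are virtual equivalences by the discussion preceding this theorem) serving as the principal tool for returning the reconnected strands to standard position. Once every elementary case is verified, assembling them along the chosen sequence completes the proof.
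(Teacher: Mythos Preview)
The paper does not actually prove this theorem: it is quoted from \cite{rkn3,rkn4}, and the text immediately following the statement says ``Refer to \cite{rkn3,rkn4} for details.'' So there is no in-paper proof to compare your proposal against.

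That said, your outline is the standard and correct strategy for results of this type, and it is the approach taken in the cited papers: reduce to a single generating move, observe that classical Reidemeister moves lift to a pair of the same moves on the two sheets, and then carry out a case-by-case verification for the virtual moves VR1--VR4 (together with the height-exchange isotopies that reorder a virtual crossing past another crossing), using detour moves to normalize the reconnected strands. Your identification of the VR cases and the height exchanges involving virtual crossings as the genuine content is accurate; those are exactly the places where the two sheets get rewired and where one must produce explicit sequences of virtual moves. Nothing in your plan is wrong, but as written it is a program rather than a proof: the substance lies entirely in the local pictures you have not yet drawn, and in particular the VR4 case (a classical crossing sliding through a virtual one) and the reordering of two virtual crossings require some care because the number and pattern of the horizontal lines $l_i$ change. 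If you intend to supply a self-contained proof rather than cite \cite{rkn3,rkn4}, those diagrams must be produced.
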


Refer to \cite{rkn3,rkn4} for details.  By this theorem, the 2-cyclic covering is defined for a virtual link.  
For a virtual link $L$, the 2-cyclic covering of $L$, denoted by $\widetilde{L}$, is defined to be the equivalence class of $\widetilde{D}$ for a diagram $D$ of $L$.  

When $L$ is partitioned into $L_1, \dots, L_n$, then the 2-cyclic covering $\widetilde{L}$ is partitioned into 
$\widetilde{L}_1, \dots, \widetilde{L}_n$.

\begin{proof}[Proof of Theorem~\ref{main2}] 
Let $D_i$ $(i=1,2)$ be a loop with no crossings.  Let $D' =D'_1 \cup D'_2$ be the diagram depicted in Figure~\ref{fig:countex1} (i), and let $L$ be a virtual link presented by $D$ which is partitioned into $L_1$ and $L_2$ with $D'(L_i) = D'_i$ $(i=1,2)$.  
We assert that there is no diagram $D$ of $L$ such that the restriction $D(L_i)$ to $L_i$ 
 is isotopic to $D_i$ for $i=1, 2$.  
 
Suppose that there is a diagram $D$ of $L$ such that the restriction $D(L_i)$ to $L_i$ 
 is isotopic to $D_i$ for $i=1, 2$.  
Consider a 2-cyclic covering diagram $\widetilde{D}$ of $D$ and let 
$\widetilde{D} = \widetilde{D}_1 \cup \widetilde{D}_2$.  
The diagram $\widetilde{D}_i$ presents the sublink $\widetilde{L}_i$ for $i=1, 2$. 

Consider a 2-cyclic covering diagram $\widetilde{D'}$ of $D'$ and let 
$\widetilde{D'} = \widetilde{D'}_1 \cup \widetilde{D'}_2$.  
The diagram $\widetilde{D'}_i$ presents the sublink $\widetilde{L}_i$ for $i=1, 2$. 

By Theorem~\ref{thm:double}, the diagram $\widetilde{D}_i$ is equivalent to 
the diagram $\widetilde{D'}_i$ as a virtual link for $i=1, 2$.  

As seen in Figure~\ref{fig:countex1} (ii), the diagram $\widetilde{D'}_1$ is a diagram consisting of two components with  linking number $1$. (The linking number of a diagram with two components is the sum of signs of classical non-self crossings divided by $2$. It is an invariant of a virtual link with two components.)  

On the other hand, the diagram $\widetilde{D}_1$ is a diagram consisting of two components with linking number $0$. (This is seen as follows:  $D_1$ is a loop with no crossings, $D_1 \cup D^*_1$ is a pair of loops with no crossings.  
The diagram  $\widetilde{D}_1$ is obtained from $(D_1 \cup D_2) \cup (D^*_1 \cup D^*_2)$ by replacement as in 
Figure~\ref{fig:defwcov2}, there is no classical crossing on $\widetilde{D}_1$.) 

This is a contradiction.  
\end{proof}

\begin{figure}[ht]
\begin{tabular}{cc}
\includegraphics[width=3.cm]{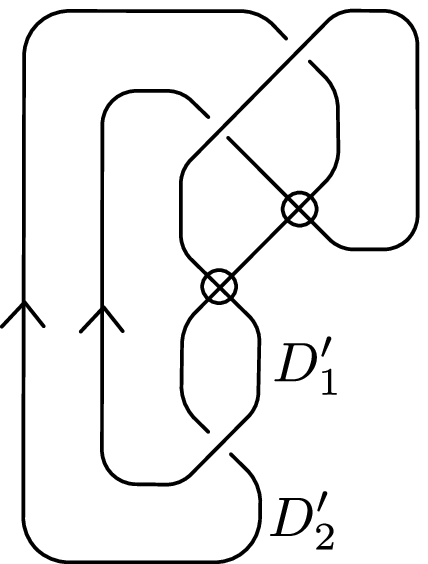}
&
\includegraphics[width=7.cm]{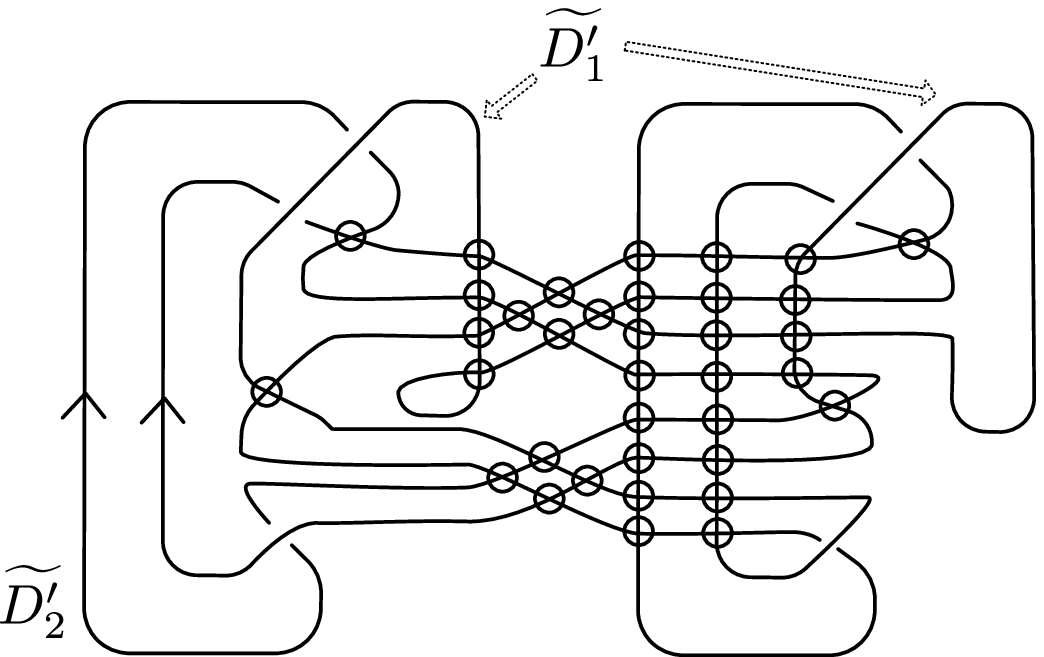}
\\
(i)&(ii)
\end{tabular}
\caption{A diagram $D'=D'_1 \cup D'_2$ and the 2-cyclic covering diagram}\label{fig:countex1}
\end{figure}

The proof above shows that there exists a virtual link $L= L_1 \cup L_2$ with two components such that when we forget $L_2$, $L_1$ is equivalent to the trivial knot  and that for any diagram $D$ of $L$, the restriction $D(L_1)$ has at least one crossing.  




\vspace{5pt}

\end{document}